\newcommand{\ord}{\operatorname{ord}}
\newtheorem{thm}{Theorem}[section]
\newtheorem{lem}[thm]{Lemma}
\newtheorem{conj}[thm]{Conjecture}
\newtheorem{prop}[thm]{Proposition}
\newtheorem{cor}[thm]{Corollary}
\theoremstyle{remark}
\theoremstyle{definition}
    \newtheorem{defn}[thm]{Definition}
\theoremstyle{THM}
\newcommand{\abs}[1]{\left|{#1}\right|}
\def\ds{\displaystyle}
\def\R {{\mathcal R}}
\def\Z {{\mathbb Z}}
\def\Q {{\mathbb Q}}
\def\d {{\rm det}}
\def\F {{\mathbb F}}
\def\R {{\mathcal R}}
\def\Z {{\mathbb Z}}
\def\Q {{\mathbb Q}}
\def\Gal{{\mbox {Gal} }}
\def\red#1 {\textcolor{red}{#1 }}
\def\blue#1 {\textcolor{blue}{#1 }}
\numberwithin{equation}{section}
\def\ds{\displaystyle}
\def\Z {{\mathbb Z}}
\begin{document}

\title[Monogenic Cyclotomic Compositions]{Monogenic Cyclotomic Compositions}


\author{Joshua Harrington}
\address{Department of Mathematics, Cedar Crest College, Allentown, Pennsylvania, USA}
\email[Joshua Harrington]{Joshua.Harrington@cedarcrest.edu}

\author{Lenny Jones}
\address{Professor Emeritus, Department of Mathematics, Shippensburg University, Shippensburg, Pennsylvania 17257, USA}
\email[Lenny~Jones]{lkjone@ship.edu}

\date{\today}

\begin{abstract}
Let $m$ and $n$ be positive integers, and let $p$ be a prime. Let $T(x)=\Phi_{p^m}\left(\Phi_{2^n}(x)\right)$, where $\Phi_k(x)$ is the cyclotomic polynomial of index $k$. In this article, we prove that $T(x)$ is irreducible over $\mathbb Q$ and that
\[\left\{1,\theta,\theta^2,\ldots,\theta^{2^{n-1}p^{m-1}(p-1)-1}\right\}\] is a basis for the ring of integers of $\mathbb Q(\theta)$, where $T(\theta)=0$.
\end{abstract}

\subjclass[2010]{Primary 11R04, Secondary 11R09, 11R32, 12F12}
\keywords{monogenic, irreducible, cyclotomic polynomial, composition}

\maketitle
\section{Introduction}\label{Section:Intro}

In this article, unless stated otherwise, polynomials $f(x)\in \Z[x]$ are assumed to be monic, and when we say $f(x)$ is ``irreducible", we mean irreducible over $\Q$.
Let $K$ be an algebraic number field of degree $n$ over $\Q$. For any $\theta\in K$, we let
$\Delta(\theta):=\Delta\left(1,\theta,\theta^2,\ldots,\theta^{n-1}\right)$ denote the discriminant of $\theta$. Similarly, we let $\Delta(f)$ and $\Delta(K)$ denote the discriminants over $\Q$, respectively, of the polynomial $f(x)$ and the field $K$. If $f(x)$ is irreducible, with $f(\theta)=0$ and $K=\Q(\theta)$, then we have the well-known equation \cite{Cohen}
\begin{equation}\label{Eq:Dis-Dis}
\Delta(f)=\Delta(\theta)=\left[\Z_K:\Z[\theta]\right]^2\Delta(K),
\end{equation}
where $\Z_K$ is the ring of integers of $K$.
We say $f(x)$ is \emph{monogenic} if $f(x)$ is irreducible and $\left[\Z_K:\Z[\theta]\right]=1$, or equivalently from \eqref{Eq:Dis-Dis} that $\Delta(f)=\Delta(K)$. In this situation,
$\left\{1,\theta, \theta^2,\ldots ,\theta^{n-1} \right\}$ is a basis for $\Z_K$ referred to as a \emph{power basis}. The existence of a power basis makes calculations much easier in $\Z_K$. A classic example is the cyclotomic field $K=\Q(\zeta)$, where $\zeta$ is a primitive $n$th root of unity \cite{Washington}. In this case, $\Z_K=\Z[\zeta]$ and so $\Z_K$ has the power basis $\left\{1,\zeta,\ldots,\zeta^{\phi(n)-1}\right\}$, where $\phi(n)$ is Euler's totient function. The minimal polynomial for $\zeta$ over $\Q$ is the cyclotomic polynomial of index $n$, which we denote as $\Phi_n(x)$. That is, $\Phi_n(x)$ is monogenic of degree $\phi(n)$.

Aside from the cyclotomic polynomials, there are many polynomials that are monogenic. In fact, it is shown in \cite{BSW} that the density of the irreducible monogenic polynomials of degree $n$ is $6/\pi^2\approx .607927$. However, finding specific infinite families of monogenic polynomials can be challenging. See for example \cite{ANHam,ANHus,BMT,ESW,Gaal,Gassert,GR1,GR2,JK,S1}.

Motivated by research concerning power bases of relative extensions \cite{GP1,G1,GS1,GRS}, we are led to ask the following basic related question:
\begin{equation}\label{Q:1}
  \mbox{If $f(x)$ and $g(x)$ are monogenic, when is $f\left(g(x)\right)$ monogenic?}
\end{equation}
It is not shocking that a complete answer to \eqref{Q:1} should be difficult to achieve. What is somewhat surprising is that the answer to \eqref{Q:1} requires some effort to unravel, even in seemingly ``easy" situations. For example, although $f(x)=x^2+17$, $g(x)=x^2+5$, $f(g(x))=x^4+10x^2+42$ and $g(f(x))=x^4+34x^2+294$ are all irreducible, and both $f(x)$ and $g(x)$ are monogenic, it turns out that $f(g(x))$ is monogenic but $g(f(x))$ is \emph{not} monogenic.
In this article, we focus our investigation on compositions of certain cyclotomic polynomials. More precisely, we prove the following.
\begin{thm}\label{Thm:Main}
 Let $m$ and $n$ be positive integers, and let $p$ be a prime. Define
 \[T(x):=\Phi_{p^m}\left(\Phi_{2^n}(x)\right),\]
 where $\Phi_k(x)$ is the cyclotomic polynomial of index $k$.
 Then $T(x)$ is irreducible and monogenic.
  \end{thm}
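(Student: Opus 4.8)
The plan is to extract irreducibility together with $p$-maximality of $\Z[\theta]$ from a single Eisenstein observation, to restrict the primes that can divide the index $[\Z_K:\Z[\theta]]$ by a resultant computation, and then to handle the prime $2$ via Dedekind's criterion; the last step carries essentially all the content. First I would note that $\Phi_{p^m}(x+1)$ is Eisenstein at $p$: reducing $\Phi_{p^m}(x+1)\bigl((x+1)^{p^{m-1}}-1\bigr)=(x+1)^{p^m}-1$ modulo $p$ gives $\overline{\Phi_{p^m}(x+1)}=x^{\phi(p^m)}$ in $\F_p[x]$, while $\Phi_{p^m}(1)=p$ is not divisible by $p^2$. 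Since substituting $x\mapsto x^{2^{n-1}}$ into an Eisenstein polynomial again produces one, $T(x)=\Phi_{p^m}\bigl(x^{2^{n-1}}+1\bigr)$ is Eisenstein at $p$; hence $T$ is irreducible and $p\nmid[\Z_K:\Z[\theta]]$.

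Next, write $T=F\circ G$ with $F=\Phi_{p^m}$, $G=\Phi_{2^n}$, so $T'=F'(G(x))\,G'(x)$. Grouping the $\deg T$ roots $\theta$ of $T$ by the value $G(\theta)$ — each root of $F$ being attained by $\deg G=2^{n-1}$ of them — a short resultant computation, using $G'(x)=2^{n-1}x^{2^{n-1}-1}$ and $T(0)=\Phi_{p^m}(1)=p$, yields
\[
\Delta(T)=\pm\,\Delta(\Phi_{p^m})^{2^{n-1}}\bigl(2^{n-1}\bigr)^{2^{n-1}\phi(p^m)}\,p^{\,2^{n-1}-1}.
\]
Since $\Delta(\Phi_{p^m})$ is a power of $p$, we get $\Delta(T)=\pm\,2^{a}p^{b}$; hence by \eqref{Eq:Dis-Dis} the only primes that can divide $[\Z_K:\Z[\theta]]$ are $2$ and $p$. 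If $p=2$ or $n=1$ then $\Delta(T)$ is a power of $p$ alone and the Eisenstein step already concludes; so assume henceforth that $p$ is odd and $n\ge 2$, and it suffices to show $2\nmid[\Z_K:\Z[\theta]]$.

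I would do this with Dedekind's criterion at $2$. Over $\F_2$ we have $x^{2^{n-1}}+1=(x+1)^{2^{n-1}}$ and $P(x)^{2^{n-1}}=P\bigl(x^{2^{n-1}}\bigr)$, so $\overline T=\overline g^{\,2^{n-1}}$ with $\overline g:=\overline{\Phi_{p^m}(x+1)}$, and $\overline g$ is squarefree because $\gcd(2,p^m)=1$. Taking the monic lift $g(x)=\Phi_{p^m}(x+1)$ of the radical and $h=g^{2^{n-1}-1}$, put $M:=\bigl(g^{2^{n-1}}-T\bigr)/2\in\Z[x]$; then Dedekind's criterion reduces the claim to $\gcd(\overline M,\overline g)=1$ in $\F_2[x]$. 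Let $\beta\in\overline{\F_2}$ be a root of $\overline g$, and lift $\beta+1$ by Hensel to a primitive $p^m$-th root of unity $\zeta$ in the valuation ring of the unramified extension $\Q_2(\zeta)/\Q_2$ (so $\overline\zeta=\beta+1$ is a unit of multiplicative order $p^m$); set $B=\zeta-1$, a root of $g$ reducing to $\beta$. From $g(B)=0$ we get $2M(B)=-T(B)=-\Phi_{p^m}(\gamma)$ with $\gamma:=(\zeta-1)^{2^{n-1}}+1$, so $\overline M(\beta)=0$ if and only if $v_2\bigl(\Phi_{p^m}(\gamma)\bigr)\ge 2$. In the residue field (characteristic $2$), $\overline\gamma=(\overline\zeta-1)^{2^{n-1}}+1=\overline\zeta^{\,2^{n-1}}$, again a primitive $p^m$-th root of unity; since reduction is injective on $p^m$-th roots of unity, only the root $\zeta^{2^{n-1}}$ of $\Phi_{p^m}$ is congruent to $\gamma$ modulo the maximal ideal, whence $v_2(\Phi_{p^m}(\gamma))=v_2\bigl((\zeta-1)^{2^{n-1}}-\zeta^{2^{n-1}}+1\bigr)$.

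Expanding the binomial and using that $2^{n-1}$ is even, this last quantity equals $2+S$ with $S=\sum_{j=1}^{2^{n-1}-1}\binom{2^{n-1}}{j}(-1)^{j}\zeta^{j}$; every $\binom{2^{n-1}}{j}$ here is even, with $v_2\binom{2^{n-1}}{j}=(n-1)-v_2(j)$, so the unique summand of $2$-adic valuation $1$ comes from $j=2^{n-2}$. One then checks that $v_2(2+S)=1$: writing $2+S=2(1+S/2)$, the residue of $S/2$ equals $\overline\zeta^{\,2^{n-2}}$ (the other summands vanishing), and $1+\overline\zeta^{\,2^{n-2}}\ne 0$ since $\ord(\overline\zeta)=p^m$ is odd and greater than $1$. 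Hence $\overline M(\beta)\ne 0$ for every root $\beta$ of $\overline g$, so $\gcd(\overline M,\overline g)=1$, giving $2\nmid[\Z_K:\Z[\theta]]$; together with the previous paragraphs, $[\Z_K:\Z[\theta]]=1$ and $T$ is monogenic. The main obstacle is precisely this last stretch — turning Dedekind's criterion into the clean $2$-adic identity $v_2(\Phi_{p^m}(\gamma))=1$ and then verifying it by the binomial-valuation count — while everything before it is essentially bookkeeping.
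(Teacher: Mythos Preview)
Your argument is correct, and its overall architecture matches the paper's: Eisenstein at $p$ gives irreducibility and handles the prime $p$, the discriminant formula confines the possible bad primes to $2$ and $p$, and Dedekind's criterion at $2$ finishes. Where you genuinely diverge is in the execution of the last step. The paper stays in $\Z[x]$: it first uses the congruence $g(x)^{2^n}\equiv g(x^2)^{2^{n-1}}\pmod 4$ to reduce to $n=2$, then carries out a lengthy explicit expansion of $F_2(x)=\bigl(g(x)^2-\Phi_{p^m}(x^2+1)\bigr)/2$ and, at a putative common root $\alpha$ of $\overline{F_2}$ and $\overline g$, factors the resulting expression through products of cyclotomic polynomials, finally invoking the resultant identity $R(\Phi_{p^jd},\Phi_{p^m})=1$ to derive a contradiction. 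You instead pass to the unramified extension $\Q_2(\zeta)$, lift the root $\beta$ of $\overline g$ to $B=\zeta-1$, and reduce everything to computing $v_2\bigl((\zeta-1)^{2^{n-1}}+1-\zeta^{2^{n-1}}\bigr)$ directly via Kummer's valuation formula $v_2\binom{2^{n-1}}{j}=(n-1)-v_2(j)$. This $2$-adic route is shorter and more conceptual: the injectivity of reduction on $p^m$-th roots of unity isolates the single factor $\gamma-\zeta^{2^{n-1}}$ in one stroke, and the binomial count handles all $n\ge 2$ uniformly without an induction. The paper's approach, by contrast, never leaves $\F_2[x]$ and uses only classical resultant identities, which keeps the toolkit more elementary at the cost of considerably more algebra. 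A minor secondary difference: for the prime $p$ you appeal to the standard fact that an Eisenstein polynomial already yields a $p$-maximal order, whereas the paper re-runs Dedekind's criterion there as well.
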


   All computer computations in this article were done using either MAGMA, Maple or Sage.

\section{Preliminaries}\label{Section:Prelim}

We first present some standard terminology and known facts.
\begin{defn}\cite{Cohen}\label{Def:ResDisc}
Let $f(x)$ and $g(x)$ be polynomials over an integral domain $\R$ with respective leading coefficients $a$ and $b$, and respective degrees $m$ and $n$. Let $K$ be the quotient field of $\R$, and let $\overline{K}$ be an algebraic closure of $K$. Suppose that the roots of $f(x)$ and $g(x)$ in $\overline{K}$ are, respectively, $r_1,r_2,\ldots ,r_m$ and $s_1,s_2,\ldots ,s_n$. Then the \emph{resultant} of $f(x)$ and $g(x)$, denoted $R(f,g)$, is defined as
   \[R(f,g):=a^nb^m\prod_{\substack{1\le i\le m\\ 1\le j \le n} }\left(r_i-s_j\right).\] The \emph{discriminant} of $f(x)$, denoted $\Delta(f)$ is defined as
  \[\Delta(f):=\dfrac{(-1)^{m(m-1)/2}}{a}R(f,f^{\prime})=(-1)^{m(m-1)/2}a^{2m-2}\prod_{i\ne j}\left(r_i-r_j\right).\]
 \end{defn}
 \begin{prop}\cite{Cohen}\label{Prop:Res}
   Let $f(x)$ and $g(x)$ be polynomials as in Definition \ref{Def:ResDisc}.
   \begin{enumerate}
   \item $R(f,g)=(-1)^{mn}R(g,f)$,
     \item $R(f,g\cdot h)=R(f,g)R(f,h)$, where $h(x)$ is any polynomial over $\R$,
     \item $\ds R(f,g)=a^n\prod_{1\le i \le m}g\left(r_i\right)$,
     \item \label{I4:Res} Let $c\in \R$. Let $u(x)=f(x+c)$ and $v(x)=g(x+c)$. Then $\ds R(u,v)=R(f,g)$.
          \item \label{I5:Res=0} $f(x)$ and $g(x)$ have a root in common in $\overline{K}$ if and only if $R(f,g)=0$.
   \end{enumerate}
    \end{prop}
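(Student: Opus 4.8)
The plan is to derive all five assertions directly from the defining product
\[R(f,g)=a^nb^m\prod_{\substack{1\le i\le m\\ 1\le j\le n}}\left(r_i-s_j\right),\]
taking item (3) as the central identity from which item (2) follows. Throughout I would use the factorizations $f(x)=a\prod_{i=1}^m(x-r_i)$ and $g(x)=b\prod_{j=1}^n(x-s_j)$ in $\overline{K}[x]$, together with the fact that $a,b\ne 0$ since they are leading coefficients.

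First I would prove (3). Evaluating the factorization of $g$ at a root $r_i$ of $f$ gives $g(r_i)=b\prod_{j=1}^n(r_i-s_j)$, so that $\prod_{i=1}^m g(r_i)=b^m\prod_{i,j}(r_i-s_j)$; multiplying by $a^n$ reproduces $R(f,g)$ exactly. Item (2) is then immediate: writing $\ell=\deg h$, the polynomial $g\cdot h$ has degree $n+\ell$, and by (3),
\[R(f,g\cdot h)=a^{n+\ell}\prod_{i=1}^m g(r_i)h(r_i)=\left(a^{n}\prod_{i=1}^m g(r_i)\right)\left(a^{\ell}\prod_{i=1}^m h(r_i)\right)=R(f,g)R(f,h).\]

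The remaining items are bookkeeping with the definition. For (1), the definition applied to the reversed pair gives $R(g,f)=a^nb^m\prod_{i,j}(s_j-r_i)$, and since $\prod_{i,j}(r_i-s_j)=(-1)^{mn}\prod_{i,j}(s_j-r_i)$ owing to the $mn$ sign changes, I obtain $R(f,g)=(-1)^{mn}R(g,f)$. For (4), translating the variable by $c$ fixes the leading coefficients $a,b$ and sends the roots to $r_i-c$ and $s_j-c$, while each difference $(r_i-c)-(s_j-c)=r_i-s_j$ is unchanged, whence $R(u,v)=R(f,g)$. For (5), because $a,b\ne 0$ the product vanishes exactly when some factor $r_i-s_j$ is zero, i.e.\ precisely when $f$ and $g$ share a root in $\overline{K}$.

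I do not expect a substantial obstacle here, since every claim reduces to manipulating the finite product over pairs of roots. The only point requiring genuine care is the bookkeeping of leading coefficients and degrees: one must consistently attach the exponent $\deg g$ to the leading coefficient of $f$ and the exponent $\deg f$ to that of $g$, and correctly track the degree of the product in (2); a slip there would propagate through items (1) and (2).
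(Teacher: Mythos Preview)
Your argument is correct in every part: each of the five items follows cleanly from the product definition of the resultant, and your bookkeeping of leading coefficients, degrees, and signs is accurate.

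As for comparison with the paper: there is nothing to compare. The paper does not prove this proposition; it simply cites it from Cohen's \emph{A Course in Computational Algebraic Number Theory} and uses the items as known tools later on (chiefly in the proofs of Theorem~\ref{Thm:Disccomp}, Corollary~\ref{Cor:Disccomp}, and the main theorem). Your write-up therefore supplies a self-contained verification that the paper deliberately omits.
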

    \begin{defn}
  Let $p$ be a prime and let
  \[f(x)=a_nx^n+a_{n-1}x^{n-1}+\cdots +a_1x+a_0\in \Z[x].\] We say $f(x)$ is \emph{$p$-Eisenstein} if
  \[a_n\not \equiv 0 \pmod{p},\quad  a_{0}\equiv a_1\equiv \cdots \equiv a_{n-1}\equiv 0 \pmod{p} \quad \mbox{and} \quad a_0\not \equiv 0 \pmod{p^2}.\]
  \end{defn}

\begin{thm}\cite{Ireland-Rosen} (Eisenstien's Irreducibility Criterion)\label{Thm:Eisenstein's Criterion}
  Let $p$ be a prime and let $f(x)\in \Z[x]$ be $p$-Eisenstein. Then $f(x)$ is irreducible.
\end{thm}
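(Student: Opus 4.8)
The plan is to prove irreducibility by contradiction, exploiting the reduction of $f(x)$ modulo $p$. Suppose $f(x)$ factors nontrivially over $\Q$. By Gauss's Lemma, such a factorization descends to one over $\Z$: there exist $g(x),h(x)\in\Z[x]$, each of positive degree, with $f(x)=g(x)h(x)$. Writing $g(x)=b_rx^r+\cdots+b_0$ and $h(x)=c_sx^s+\cdots+c_0$ with $r,s\ge 1$ and $r+s=n$, the goal is to show that this factorization is incompatible with the Eisenstein hypotheses.

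The key step is to pass to $\F_p[x]$. Let $\overline{f}$ denote the polynomial obtained from $f$ by reducing every coefficient modulo $p$. Since $a_0\equiv a_1\equiv\cdots\equiv a_{n-1}\equiv 0$ and $a_n\not\equiv 0\pmod p$, this reduction collapses to $\overline{f}(x)=\overline{a_n}\,x^n$. Hence $\overline{g}(x)\,\overline{h}(x)=\overline{a_n}\,x^n$ in $\F_p[x]$. Because $\F_p[x]$ is a unique factorization domain in which $x$ is prime and $\overline{a_n}$ is a unit, every irreducible factor of $\overline{g}(x)$ and of $\overline{h}(x)$ must be an associate of $x$. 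Thus I would conclude $\overline{g}(x)=\overline{b_r}\,x^r$ and $\overline{h}(x)=\overline{c_s}\,x^s$.

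From this the contradiction falls out by inspecting constant terms. Since $r\ge 1$ and $s\ge 1$, the $x^0$-coefficients of $\overline{g}$ and $\overline{h}$ both vanish, that is, $p\mid b_0$ and $p\mid c_0$. But $a_0=b_0c_0$, so $p^2\mid a_0$, contradicting $a_0\not\equiv 0\pmod{p^2}$. This contradiction shows that no nontrivial factorization exists, so $f(x)$ is irreducible. I expect the only delicate point to be the descent from $\Q$ to $\Z$ via Gauss's Lemma, together with the observation that both factors genuinely have positive degree (so that $r,s\ge 1$ and neither factor is forced to be a unit). Once the factorization is known to live in $\Z[x]$, the modular argument is essentially forced, with the single hypothesis $p^2\nmid a_0$ supplying the crucial obstruction.
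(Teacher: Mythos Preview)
Your argument is correct and is the standard proof of Eisenstein's criterion. Note, however, that the paper does not supply its own proof of this theorem; it merely states it with a citation to \cite{Ireland-Rosen}, so there is nothing in the paper to compare your approach against.
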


\begin{thm}\label{Thm:Cyclo}
Let $n$ be a positive integer, and let $\Phi_n(x)$ denote the cyclotomic polynomial of index $n$. 
\begin{enumerate}
         \item \label{I:2} \cite{Nagell} Let $p$ be a prime. Then
          \[\Phi_n\left(x^p\right)=\left\{\begin{array}{cl} \Phi_{pn}(x) & \mbox{if $n\equiv 0 \pmod{p}$}\\
      \Phi_n(x)\Phi_{pn}(x) & \mbox{if $n\not \equiv 0 \pmod{p}$}\\
      \end{array}\right.\]
      \item \label{I:3} \cite{Washington} Let $p$ be a prime, and let $m$ be a positive integer. Then
      \[\Delta\left(\Phi_{p^m}(x)\right)=\varepsilon p^{p^{m-1}(pm-m-1)},\]
      \[\mbox{where} \quad \varepsilon=\left\{\begin{array}{rc}
        -1 & \mbox{if $p^m=4$ or $p\equiv 3\pmod{4}$}\\
        1  & \mbox{otherwise}.
      \end{array}\right.\]
      \item \label{I:0} \cite{Guerrier} Let $q$ be a prime such that $q\nmid n$. Let $\ord_n(q)$ denote the order of $q$ modulo $n$. Then $\Phi_n(x)$ factors modulo $q$ into a product of $\phi(n)/\ord_n(q)$ distinct irreducible polynomials, each of degree $\ord_n(q)$. Additionally,
  for any positive integer $m$,
         \[\Phi_{q^mn}(x)\equiv \Phi_n(x)^{\phi\left(q^m\right)} \pmod{q}.\]
         \item \label{Dresden} \cite{Dresden} Let $m$ and $n$ be integers such that $0<m<n$.
             Then
         \[R\left(\Phi_m,\Phi_n\right)=\left\{\begin{array}{cl}
           p^{\phi(m)} & \mbox{if $n/m=p^a$ for some prime $p$ and integer $a\ge 1$,}\\
           1 & \mbox{otherwise.}\\
            \end{array}\right.\]
\end{enumerate}
\end{thm}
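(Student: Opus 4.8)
The plan is to establish the four parts separately, each resting on the fundamental identity $x^n-1=\prod_{d\mid n}\Phi_d(x)$ and its M\"obius inversion $\Phi_n(x)=\prod_{d\mid n}(x^d-1)^{\mu(n/d)}$. For part \eqref{I:2}, I would compare roots and degrees. The roots of $\Phi_n(x^p)$ are precisely the $p$-th roots of the primitive $n$-th roots of unity: if $\omega^p=\zeta$ with $\zeta$ of order $n$, then $\ord(\omega^p)=\ord(\omega)/\gcd(\ord(\omega),p)$, and a short computation shows $\omega$ has order $pn$ whenever $p\mid n$, while $\omega$ has order either $n$ or $pn$ when $p\nmid n$. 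Since $\deg\Phi_n(x^p)=p\,\phi(n)$, and $\phi(pn)=p\,\phi(n)$ when $p\mid n$ whereas $\phi(n)+\phi(pn)=p\,\phi(n)$ when $p\nmid n$, counting primitive roots of each order against the degree forces exactly the two stated factorizations.

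For the discriminant in part \eqref{I:3}, I would use Definition \ref{Def:ResDisc} to write $\Delta(\Phi_{p^m})=(-1)^{\phi(p^m)(\phi(p^m)-1)/2}\,N_{\Q(\zeta)/\Q}\bigl(\Phi_{p^m}'(\zeta)\bigr)$, where $\zeta$ is a primitive $p^m$-th root of unity. Differentiating the identity $\Phi_{p^m}(x)\,(x^{p^{m-1}}-1)=x^{p^m}-1$ and evaluating at $\zeta$ gives $\Phi_{p^m}'(\zeta)=p^m\zeta^{-1}/(\zeta^{p^{m-1}}-1)$. Because $\zeta^{p^{m-1}}$ is a primitive $p$-th root of unity, the norms $N(p^m)=p^{m\phi(p^m)}$, $N(\zeta)=\pm1$, and $N(\zeta^{p^{m-1}}-1)=\pm p^{\phi(p^m)/\phi(p)}=\pm p^{p^{m-1}}$ combine to produce the exponent $p^{m-1}(pm-m-1)$, with the sign $\varepsilon$ resolved by tracking the complex-conjugate pairs among the embeddings (equivalently, $r_2=\phi(p^m)/2$ for the totally complex field).

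For part \eqref{I:0}, the factorization statement follows from separability and Frobenius. When $q\nmid n$, the polynomial $x^n-1$ is separable over $\F_q$, so $\Phi_n(x)$ has distinct roots in $\overline{\F_q}$; the Frobenius map $\alpha\mapsto\alpha^q$ permutes the primitive $n$-th roots in orbits all of length $\ord_n(q)$, and each orbit contributes one irreducible factor of that degree, giving $\phi(n)/\ord_n(q)$ factors. For the congruence $\Phi_{q^mn}(x)\equiv\Phi_n(x)^{\phi(q^m)}\pmod q$, I would induct on $m$ using part \eqref{I:2} together with the identity $f(x^q)\equiv f(x)^q\pmod q$ valid for $f\in\F_q[x]$: the base case reads $\Phi_{qn}(x)=\Phi_n(x^q)/\Phi_n(x)\equiv\Phi_n(x)^{q-1}\pmod q$, and the inductive step applies $\Phi_{q^mn}(x)=\Phi_{q^{m-1}n}(x^q)\equiv\Phi_{q^{m-1}n}(x)^q\pmod q$, since $q\,\phi(q^{m-1})=\phi(q^m)$.

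For the resultant in part \eqref{Dresden}, I would begin from $R(\Phi_m,\Phi_n)=\prod_{\zeta}\Phi_n(\zeta)$, the product taken over the primitive $m$-th roots of unity $\zeta$, as furnished by Proposition \ref{Prop:Res}. Substituting the M\"obius factorization for $\Phi_n$ reduces the computation to products of the shape $\prod_{\zeta}(\zeta^d-1)$, each of which evaluates to a power of $\Phi_{m'}(1)$ with $m'=m/\gcd(m,d)$; using $\Phi_k(1)=q$ for $k$ a power of a prime $q$ and $\Phi_k(1)=1$ otherwise, one collects the M\"obius-weighted exponents to obtain $q^{\phi(m)}$ exactly when $n/m$ is a prime power and $1$ in all other cases. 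The main obstacle lies in the bookkeeping of parts \eqref{I:3} and \eqref{Dresden}: pinning down the sign $\varepsilon$ demands a careful count of the complex-conjugate embedding pairs, and in the resultant the M\"obius-weighted product of the values $\Phi_{m'}(1)$ must be shown to telescope cleanly into the stated prime-power dichotomy.
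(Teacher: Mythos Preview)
The paper does not actually prove this theorem: each of the four parts is simply quoted with a citation to the literature (Nagell, Washington, Guerrier, Dresden), and no argument is given in the paper itself. So there is no paper proof to compare your proposal against.

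That said, your sketches are sound and are essentially the standard arguments found in those references. For part~\eqref{I:2} the root-and-degree count is exactly how one usually proceeds; for part~\eqref{I:3} the identity $\Phi_{p^m}(x)(x^{p^{m-1}}-1)=x^{p^m}-1$ differentiated at a primitive $p^m$-th root is precisely Washington's computation; for part~\eqref{I:0} the Frobenius-orbit argument and the induction via $f(x^q)\equiv f(x)^q\pmod q$ are the expected proofs; and for part~\eqref{Dresden} your reduction to values $\Phi_{m'}(1)$ via M\"obius inversion is one of the known routes. Your own caveat is accurate: the only places requiring genuine care are tracking the sign $\varepsilon$ in part~\eqref{I:3} and making the M\"obius bookkeeping in part~\eqref{Dresden} explicit, but neither hides a real obstruction.
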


The next lemma follows by a straightforward induction on $n$.
\begin{lem}\label{Lem:Induction}
 Let $G(x), H(x)\in \Z[x]$, and let $q$ be a prime. If $G(x)\equiv H(x) \pmod{q}$, then
  \[G(x)^{q^n}\equiv H(x)^{q^n} \pmod{q^{n+1}} \quad \mbox{for all $n\ge 1$}.\]
  \end{lem}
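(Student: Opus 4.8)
The plan is to isolate a single ``one-step'' improvement and prove it once via the binomial theorem, after which the stated congruence falls out by iterating that step. The engine of the argument is the following claim: \emph{if $U(x),V(x)\in\Z[x]$ satisfy $U\equiv V\pmod{q^{j}}$ for some integer $j\ge 1$, then $U^{q}\equiv V^{q}\pmod{q^{j+1}}$.} To prove this, I would write $V=U+q^{j}B$ for some $B(x)\in\Z[x]$ and expand:
\[
V^{q}=\sum_{k=0}^{q}\binom{q}{k}U^{q-k}\bigl(q^{j}B\bigr)^{k}
=U^{q}+\sum_{k=1}^{q}\binom{q}{k}q^{jk}U^{q-k}B^{k}.
\]
Everything then reduces to checking that each term with $k\ge 1$ is divisible by $q^{j+1}$.

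That divisibility check is the heart of the matter, and it splits into three cases. For $k=1$ the coefficient is $\binom{q}{1}q^{j}=q^{j+1}$, which is divisible by $q^{j+1}$ on the nose. For $2\le k\le q-1$, since $q$ is prime we have $q\mid\binom{q}{k}$, so that term carries a factor $q^{1+jk}$, and $1+jk\ge j+1$ because $jk\ge j$ for $k\ge 1$. For $k=q$ the term carries the factor $q^{jq}$, and $jq\ge j+1$ holds precisely because $j\ge 1$. Thus every summand beyond $U^{q}$ vanishes modulo $q^{j+1}$, establishing the claim. I would stress that the hypothesis $j\ge 1$ is genuinely used here (in the $k=q$ case), so the claim must be fed a true congruence modulo a positive power of $q$.

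With the claim in hand, the induction on $n$ is immediate. For the base case $n=1$, the hypothesis $G\equiv H\pmod{q}$ is exactly the claim with $U=G$, $V=H$, $j=1$, yielding $G^{q}\equiv H^{q}\pmod{q^{2}}$. For the inductive step, assume $G^{q^{n}}\equiv H^{q^{n}}\pmod{q^{n+1}}$; applying the claim with $U=G^{q^{n}}$, $V=H^{q^{n}}$, and $j=n+1\ge 1$ gives $\bigl(G^{q^{n}}\bigr)^{q}\equiv\bigl(H^{q^{n}}\bigr)^{q}\pmod{q^{n+2}}$, which is precisely $G^{q^{n+1}}\equiv H^{q^{n+1}}\pmod{q^{n+2}}$. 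This completes the induction.

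There is no serious obstacle to overcome: the result is elementary, as the paper indicates. The only point requiring a little care is the exponent bookkeeping in the three-case analysis of the claim, together with the observation that each application of the claim raises the modulus exponent by exactly one while raising the polynomial exponent by a factor of $q$, so that after $n$ steps one lands on $q^{n+1}$ and $q^{n}$ in perfect synchrony.
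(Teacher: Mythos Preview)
Your proof is correct and follows exactly the approach the paper indicates: a straightforward induction on $n$, with the inductive engine being the standard ``lifting the exponent by one'' step $U\equiv V\pmod{q^{j}}\Rightarrow U^{q}\equiv V^{q}\pmod{q^{j+1}}$ via the binomial theorem. The paper does not spell out the details, so your write-up is simply a fleshed-out version of what the authors leave to the reader.
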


The following theorem gives a formula for the discriminant of the composition of two polynomials. To the best of our knowledge, this result is originally due to John Cullinan \cite{Cullinan} and does not appear in the literature. Hence, for the sake of completeness, we present a proof here.

\begin{thm}\label{Thm:Disccomp}
Let $f(x)$ and $g(x)$ be polynomials in $\mathbb{Q}[x]$, with respective leading coefficients $a$ and $b$, and respective degrees $m$ and $n$.  Then
$$\Delta(f\circ g)=(-1)^{m^2n(n-1)/2}\cdot a^{n-1}b^{m(mn-n-1)}\Delta(f)^{n}R(f\circ g,g')$$
\end{thm}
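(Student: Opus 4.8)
The plan is to compute $\Delta(f\circ g)$ directly from the product formula for the discriminant in Definition \ref{Def:ResDisc} and then massage the resulting product into the stated closed form. Write $h=f\circ g$, a polynomial of degree $mn$ with leading coefficient $ab^m$. Let $r_1,\dots,r_m$ be the roots of $f$ and, for each $i$, let $s_{i,1},\dots,s_{i,n}$ be the roots of $g(x)-r_i$; then the roots of $h$ are exactly the $s_{i,j}$. The first step is to plug these roots into
\[
\Delta(h)=\frac{(-1)^{mn(mn-1)/2}}{ab^m}\,R(h,h').
\]
Since $h'(x)=f'(g(x))g'(x)$, Proposition \ref{Prop:Res}(2) gives $R(h,h')=R(h,f'\circ g)\,R(h,g')$, so the whole problem reduces to evaluating $R(h,f'\circ g)$ and folding the constants; the factor $R(h,g')=R(f\circ g,g')$ is left untouched, exactly as it appears in the statement.

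The heart of the argument is the evaluation of $R(h,f'\circ g)$. Using Proposition \ref{Prop:Res}(3),
\[
R(h,f'\circ g)=(ab^m)^{m-1}\prod_{i,j} f'\!\left(g(s_{i,j})\right)=(ab^m)^{m-1}\prod_{i=1}^{m} f'(r_i)^{n},
\]
because $g(s_{i,j})=r_i$ and there are $n$ values of $j$ for each $i$. Now $\prod_i f'(r_i)$ is, up to the leading coefficient and sign, the discriminant of $f$: from Definition \ref{Def:ResDisc}, $R(f,f')=a^{m-1}\prod_i f'(r_i)$ and $\Delta(f)=(-1)^{m(m-1)/2}R(f,f')/a$, so $\prod_i f'(r_i)=(-1)^{m(m-1)/2}a^{-(m-2)}\Delta(f)$ (being careful that $R(f,f')$ here uses the convention $R(f,f')=a^{m-1}\prod f'(r_i)$, i.e. the $a^n b^m$ prefactor with $g=f'$ of degree $m-1$). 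Substituting this in gives
\[
R(h,f'\circ g)=(ab^m)^{m-1}(-1)^{mn(m-1)/2}a^{-n(m-2)}\Delta(f)^{n}.
\]

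It then remains to assemble everything: divide by $ab^m$ (the leading coefficient of $h$), multiply by the global sign $(-1)^{mn(mn-1)/2}$, and collect the powers of $a$ and $b$. The exponent of $a$ should collapse to $n-1$ and the exponent of $b$ to $m(mn-n-1)$, while the two sign contributions $(-1)^{mn(mn-1)/2}$ and $(-1)^{mn(m-1)/2}$ should combine (using $mn(mn-1)/2 + mn(m-1)/2 \equiv m^2n(n-1)/2 \pmod 2$) to give $(-1)^{m^2n(n-1)/2}$. I expect the main obstacle to be purely bookkeeping: tracking the exponents of $a$ and $b$ through the substitution $\prod_i f'(r_i)=(-1)^{m(m-1)/2}a^{2-m}\Delta(f)$ and through the various resultant prefactors $(ab^m)^{m-1}$, together with verifying the sign identity modulo $2$. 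There is no deep idea required beyond the resultant identities already recorded in Proposition \ref{Prop:Res} and Definition \ref{Def:ResDisc}; the care lies entirely in not dropping a factor of $a$, $b$, or $-1$.
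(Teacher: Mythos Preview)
Your approach is exactly the paper's: expand $\Delta(h)$ via $R(h,h')$, split $h'=(f'\circ g)g'$ using Proposition~\ref{Prop:Res}(2), evaluate $R(h,f'\circ g)$ by Proposition~\ref{Prop:Res}(3) and rewrite $\prod_i f'(r_i)$ in terms of $\Delta(f)$, then collect constants. The one slip is in the prefactor from Proposition~\ref{Prop:Res}(3): since $\deg(f'\circ g)=(m-1)n$, you should have $R(h,f'\circ g)=(ab^m)^{(m-1)n}\prod_{i,j}f'(g(s_{i,j}))$, not $(ab^m)^{m-1}$; with that correction the powers of $a$ and $b$ do collapse to $n-1$ and $m(mn-n-1)$ exactly as you anticipate.
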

\begin{proof}
Note that the degree of $f(g(x))$ is $mn$, the leading coefficient of $f(g(x))$ is $ab^m$, the degree of $f^{\prime}(x)$ is $m-1$ and the degree of $f^{\prime}(g(x))$ is $(m-1)n$.
Then
\begin{spreadlines}{.11in}
\begin{align*}
\Delta(f\circ g)
=&\frac{(-1)^{mn(mn-1)/2}}{ab^m}\cdot R(f\circ g,(f\circ g)')\\
=&\frac{(-1)^{mn(mn-1)/2}}{ab^m}\cdot R(f\circ g,(f'\circ g)\cdot g')\\
=&\frac{(-1)^{mn(mn-1)/2}}{ab^m}\cdot R(f\circ g,f'\circ g)\cdot R(f\circ g,g') \quad \mbox{(Proposition \ref{Prop:Res})} \\
=&\frac{(-1)^{mn(mn-1)/2}}{ab^m}\cdot \left((ab^m)^{(m-1)n}\prod_{\left\{\alpha : f(g(\alpha))=0\right\}}f'(g(\alpha))\right)R(f\circ g,g')\\
=&\frac{(-1)^{mn(mn-1)/2}}{ab^m}\cdot (ab^m)^{(m-1)n}\left(\prod_{\left\{\theta : f(\theta)=0\right\}}f'(\theta)\right)^{n}R(f\circ g,g')\\
=&\frac{(-1)^{mn(mn-1)/2}}{ab^m}\cdot (ab^m)^{(m-1)n}\left(\frac{1}{a^{m-1}}R(f,f')\right)^{n}R(f\circ g,g')\\
=&\frac{(-1)^{mn(mn-1)/2}}{ab^m}\cdot \frac{(ab^m)^{(m-1)n}}{a^{(m-1)n}}R(f,f')^{n}R(f\circ g,g')\quad \mbox{(Proposition \ref{Prop:Res})}\\
=&\frac{(-1)^{mn(mn-1)/2}}{ab^m}\cdot \frac{(ab^m)^{(m-1)n}}{a^{(m-1)n}}\left(\frac{a\Delta(f)}{(-1)^{m(m-1)/2}}\right)^{n}R(f\circ g,g')\\
=&(-1)^{m^2n(n-1)/2}\cdot a^{n-1}b^{m(mn-n-1)}\Delta(f)^{n}R(f\circ g,g').\qedhere
\end{align*}
\end{spreadlines}
\end{proof}

The following special case of Theorem \ref{Thm:Disccomp} will be useful to us.
\begin{cor}\label{Cor:Disccomp}
Let $f(x)\in \mathbb{Q}[x]$, such that $a$ is the leading coefficient of $f(x)$,  and $m=\deg(f)$. If $g(x)=bx^n+c\in \Q[x]$, then
$$\Delta(f\circ g)=(-1)^{mn(n-1)(m+2n)/2} a^{n-1}b^{m(mn-1)}\Delta(f)^{n}n^{mn}f(c)^{n-1}.$$
\end{cor}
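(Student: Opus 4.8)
The plan is to specialize Theorem~\ref{Thm:Disccomp} to $g(x)=bx^{n}+c$ and to evaluate the single remaining resultant $R(f\circ g,g')$ in closed form. First I would observe that $g'(x)=nbx^{n-1}$, so $g'$ has leading coefficient $nb$, degree $n-1$, and exactly one root in $\overline{\Q}$, namely $0$, with multiplicity $n-1$. Since $\deg(f\circ g)=mn$, part~(1) of Proposition~\ref{Prop:Res} gives $R(f\circ g,g')=(-1)^{mn(n-1)}R(g',f\circ g)$, and because $n(n-1)$ is always even this sign is $+1$; thus it remains to compute $R(g',f\circ g)$.

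Next I would apply part~(3) of Proposition~\ref{Prop:Res} with $g'$ playing the role of the first polynomial: the product over the roots of $g'$ collapses to $(f\circ g)(0)^{\,n-1}$, and since $g(0)=c$ this equals $f(c)^{\,n-1}$, whence
\[
R(f\circ g,g')=(nb)^{mn}f(c)^{\,n-1}=n^{mn}b^{mn}f(c)^{\,n-1}.
\]
Substituting into Theorem~\ref{Thm:Disccomp} and combining the two powers of $b$ via $m(mn-n-1)+mn=m(mn-1)$, I would obtain
\[
\Delta(f\circ g)=(-1)^{m^{2}n(n-1)/2}\,a^{n-1}b^{m(mn-1)}n^{mn}\Delta(f)^{n}f(c)^{\,n-1},
\]
which is the asserted formula apart from the precise shape of the leading sign.

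The only step requiring genuine attention is reconciling $(-1)^{m^{2}n(n-1)/2}$ with the stated $(-1)^{mn(n-1)(m+2n)/2}$. I would do this by comparing exponents: their difference is
\[
\frac{mn(n-1)(m+2n)}{2}-\frac{m^{2}n(n-1)}{2}=mn^{2}(n-1),
\]
an even integer since $n(n-1)$ is even; hence the two signs coincide and the corollary follows. In short, the argument is a direct substitution into Theorem~\ref{Thm:Disccomp} together with an elementary parity bookkeeping, so I do not anticipate any real obstacle — the ``hard part'', such as it is, is merely keeping all the signs straight.
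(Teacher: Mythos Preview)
Your proposal is correct and follows essentially the same route as the paper: specialize Theorem~\ref{Thm:Disccomp}, flip $R(f\circ g,g')$ to $R(g',f\circ g)$ via Proposition~\ref{Prop:Res}, evaluate the latter as $(nb)^{mn}f(c)^{n-1}$, and combine powers of $b$. The only cosmetic difference is that the paper carries the factor $(-1)^{mn(n-1)}$ through to the end before collapsing the sign, whereas you observe up front that it equals $1$ and then check the two stated sign exponents have the same parity.
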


\begin{proof} Note that $\deg(f\circ g)=mn$ and $\deg(g')=n-1$. Then, using Theorem \ref{Thm:Disccomp} and Proposition \ref{Prop:Res} we have
\begin{spreadlines}{.11in}
 \begin{align*}
\Delta(f\circ g)
=&(-1)^{m^2n(n-1)/2} a^{n-1}b^{(m-1)nm-m}\Delta(f)^{n}R(f\circ g,g')\\
=&(-1)^{m^2n(n-1)/2} a^{n-1}b^{(m-1)nm-m}\Delta(f)^{n}(-1)^{mn(n-1)}R(g',f\circ g)\\
=&(-1)^{m^2n(n-1)/2} a^{n-1}b^{(m-1)nm-m}\Delta(f)^{n}(-1)^{mn(n-1)}(bn)^{mn}\prod_{i=1}^{n-1}f(g(0))\\
=&(-1)^{m^2n(n-1)/2} a^{n-1}b^{(m-1)nm-m}\Delta(f)^{n}(-1)^{mn(n-1)}(bn)^{mn}f(c)^{n-1}\\
=&(-1)^{mn(n-1)(m+2n)/2} a^{n-1}b^{m(mn-1)}\Delta(f)^{n}n^{mn}f(c)^{n-1}.\qedhere
\end{align*}
\end{spreadlines}
\end{proof}

\begin{thm}\cite{Conrad}\label{Thm:Conrad}
 Let $p$ be a prime and let $f(x)\in \Z[x]$ be a monic $p$-Eisenstien polynomial with $\deg(f)=n$. Let $K=\Q(\theta)$, where $f(\theta)=0$. If $n\not \equiv 0 \pmod{p}$, then $p^{n-1} || \Delta(K)$.
\end{thm}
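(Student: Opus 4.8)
The plan is to reduce the computation of the $p$-adic valuation of $\Delta(K)$ to that of $\Delta(f)$, and then to evaluate the latter by passing to the unique prime above $p$. Since $f$ is $p$-Eisenstein, Theorem \ref{Thm:Eisenstein's Criterion} gives that $f$ is irreducible, so $[K:\Q]=n$ and $\theta$ is an algebraic integer. Write $f(x)=x^n+a_{n-1}x^{n-1}+\cdots+a_1x+a_0$; the Eisenstein hypotheses say $p\mid a_i$ for $0\le i\le n-1$ and $p^2\nmid a_0$, so $\overline{f}(x)=x^n$ in $\F_p[x]$. I would first show $p\nmid[\Z_K:\Z[\theta]]$ via Dedekind's criterion: with the factorization $\overline{f}=x^n$, take lifts $g(x)=x$ and $h(x)=x^{n-1}$, so that $M(x):=(f(x)-g(x)h(x))/p=(a_{n-1}/p)x^{n-1}+\cdots+(a_0/p)$. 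Because $p^2\nmid a_0$, the constant term $a_0/p$ is a unit modulo $p$, hence $x\nmid\overline{M}$ and $\gcd(\overline{x},\overline{M})=1$; Dedekind's criterion then yields $p\nmid[\Z_K:\Z[\theta]]$. Feeding this into \eqref{Eq:Dis-Dis} gives $v_p(\Delta(f))=v_p(\Delta(K))$, so it suffices to prove $v_p(\Delta(f))=n-1$.

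Next I would fix the local structure at $p$. Since $p\nmid[\Z_K:\Z[\theta]]$, the Dedekind–Kummer theorem transfers $\overline{f}=x^n$ to a factorization $p\Z_K=\mathfrak{p}^n$, where $\mathfrak{p}=(p,\theta)$ has residue degree $1$; thus $v_{\mathfrak{p}}(p)=n$ and $p$ is totally ramified. From $\theta\in\mathfrak{p}$ we have $v_{\mathfrak{p}}(\theta)\ge 1$, and substituting into $\theta^n=-(a_{n-1}\theta^{n-1}+\cdots+a_1\theta+a_0)$ shows every term on the right except $a_0$ has $\mathfrak{p}$-valuation $\ge n+1$, while $v_{\mathfrak{p}}(a_0)=n\cdot v_p(a_0)=n$; comparing valuations forces $v_{\mathfrak{p}}(\theta^n)=n$, so $v_{\mathfrak{p}}(\theta)=1$ and $\theta$ is a uniformizer.

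Finally I would compute $v_{\mathfrak{p}}(f'(\theta))$. By Proposition \ref{Prop:Res}(3) and monicity, $\Delta(f)=\pm R(f,f')=\pm N_{K/\Q}(f'(\theta))$, and since $\mathfrak{p}$ is the unique prime over $p$ and has residue degree $1$, $v_p(\Delta(f))=v_{\mathfrak{p}}(f'(\theta))$. Writing $f'(\theta)=n\theta^{n-1}+\sum_{k=1}^{n-1}ka_k\theta^{k-1}$, the leading term satisfies $v_{\mathfrak{p}}(n\theta^{n-1})=n-1$ precisely because $p\nmid n$, whereas each remaining term satisfies $v_{\mathfrak{p}}(ka_k\theta^{k-1})\ge n+(k-1)\ge n$. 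The strict minimum is therefore attained uniquely by $n\theta^{n-1}$, giving $v_{\mathfrak{p}}(f'(\theta))=n-1$, and hence $v_p(\Delta(K))=v_p(\Delta(f))=n-1$, i.e.\ $p^{\,n-1}\,\|\,\Delta(K)$.

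The main obstacle is the local picture of the second paragraph: that $\theta$ is a uniformizer and that $\Z[\theta]$ is $p$-maximal. This is where the two sharp Eisenstein hypotheses enter essentially — $p^2\nmid a_0$ is exactly what makes $\Z[\theta]$ $p$-maximal and pins down $v_{\mathfrak{p}}(\theta)=1$, while $p\nmid n$ (tame ramification) is exactly what keeps the leading term $n\theta^{n-1}$ of $f'(\theta)$ of minimal valuation $n-1$. If instead $p\mid n$, that term would be pushed up and the $\mathfrak{p}$-valuation of $f'(\theta)$ (equivalently, of the different) would exceed $n-1$, so the stated conclusion genuinely requires the tameness assumption.
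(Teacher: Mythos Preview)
The paper does not supply a proof of this theorem; it is simply quoted with a citation to Conrad's notes, and in fact it is never invoked in the proof of the main result (the prime $p$ there is handled directly via Dedekind's criterion). So there is no ``paper's own proof'' to compare against.

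Your argument is correct and is the standard one that Conrad gives. Dedekind's criterion with $\overline f=x^n$, $g=x$, $h=x^{n-1}$ shows $p\nmid[\Z_K:\Z[\theta]]$ (your $M$ is the negative of the paper's $F$, which is immaterial for the gcd), so $v_p(\Delta(K))=v_p(\Delta(f))$ by \eqref{Eq:Dis-Dis}. The Eisenstein hypothesis then forces $p\Z_K=\mathfrak p^n$ with $\theta$ a uniformizer, and since $\mathfrak p$ is the unique prime over $p$ with residue degree $1$, $v_p(\Delta(f))=v_p\!\left(N_{K/\Q}(f'(\theta))\right)=v_{\mathfrak p}(f'(\theta))$. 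The term-by-term valuation count in $f'(\theta)$ is exactly right: $v_{\mathfrak p}(n\theta^{n-1})=n-1$ because $p\nmid n$, while every other term has valuation $\ge n$, giving $v_{\mathfrak p}(f'(\theta))=n-1$. Your closing paragraph correctly identifies where each hypothesis is used.
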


\begin{thm}[Dedekind] \label{Thm:Dedekind}\cite{Cohen}
Let $K=\Q(\theta)$ be a number field, $T(x)\in \Z[x]$ the monic minimal polynomial of $\theta$, and $\Z_K$ the ring of integers of $K$. Let $p$ be a prime number and let $\overline{ * }$ denote reduction of $*$ modulo $p$ (in $\Z$, $\Z[x]$ or $\Z[\theta]$). Let
\[\overline{T}(x)=\prod_{i=1}^k\overline{t_i}(x)^{e_i}\]
be the factorization of $T(x)$ modulo $p$ in $\F_p[x]$, and set
\[g(x)=\prod_{i=1}^kt_i(x),\]
where the $t_i(x)\in \Z[x]$ are arbitrary monic lifts of the $\overline{t_i}(x)$. Let $h(x)\in \Z[x]$ be a monic lift of $\overline{T}(x)/\overline{g}(x)$ and set
\[F(x)=\dfrac{g(x)h(x)-T(x)}{p}\in \Z[x].\]
Then
\[\left[\Z_K:\Z[\theta]\right]\not \equiv 0 \pmod{p} \Longleftrightarrow \gcd\left(\overline{F},\overline{g},\overline{h}\right)=1 \mbox{ in } \F_p[x].\]
\end{thm}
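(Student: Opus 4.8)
The plan is to translate the index condition into a question about integrality and then resolve that question using the identity forced by the definition of $F$. Writing $\mathcal{O}=\Z[\theta]\cong\Z[x]/(T)$, I would first reduce to a single enlargement step: a finite-abelian-group argument shows that $p\nmid[\Z_K:\mathcal{O}]$ if and only if $\tfrac1p\mathcal{O}\cap\Z_K=\mathcal{O}$, since any element of $\Z_K\setminus\mathcal{O}$ of $p$-power order in $\Z_K/\mathcal{O}$ can be scaled into $\tfrac1p\mathcal{O}$. I would then package the obstruction as the set $M=\{\,\overline{a}\in\F_p[x]/(\overline{T}):a(\theta)/p\in\Z_K\,\}$, which is well defined independently of the chosen lift $a$ and is an ideal of $\overline{\mathcal{O}}=\F_p[x]/(\overline{T})$ because $\Z_K$ is a ring containing $\mathcal{O}$. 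Thus $p\nmid[\Z_K:\mathcal{O}]\iff M=\{0\}$, and the theorem reduces to proving $M=\{0\}\iff\gcd(\overline{F},\overline{g},\overline{h})=1$.

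The engine of the argument is the identity obtained by evaluating $gh-T=pF$ at $\theta$: since $T(\theta)=0$, one gets $g(\theta)h(\theta)=pF(\theta)$. For the implication $\gcd\neq1\Rightarrow M\neq\{0\}$, I would set $\overline{d}=\gcd(\overline{F},\overline{g},\overline{h})$ with $\deg\overline{d}\ge1$, write $\overline{g}=\overline{d}\,\overline{g_1}$, and consider $\beta=g_1(\theta)h(\theta)/p$. Using $g(\theta)h(\theta)=pF(\theta)$ together with $\overline{d}\mid\overline{g},\overline{h},\overline{F}$, a direct check shows that both $p\beta$ and $g(\theta)\beta$ lie in the radical ideal $I=(p,g(\theta))$, so $\beta I\subseteq I$. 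Since $I$ is a nonzero, hence faithful, ideal of the order $\mathcal{O}$, the ring $(I:I)=\{x\in K:xI\subseteq I\}$ is module-finite over $\mathcal{O}$ and therefore integral over $\Z$; thus $\beta\in(I:I)\subseteq\Z_K$, while a degree count on $p\beta=g_1(\theta)h(\theta)$, whose reduction is $\overline{T}/\overline{d}\neq0$ of degree $<\deg\overline{T}$, shows $\beta\notin\mathcal{O}$. Hence $M\neq\{0\}$.

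For the converse $M\neq\{0\}\Rightarrow\gcd\neq1$, I would take $\beta=a(\theta)/p\in\Z_K\setminus\mathcal{O}$ and first extract the divisibility $\overline{g}\mid\overline{a}$: for each prime $\mathfrak{p}$ of $\Z_K$ above $p$ lying over the maximal ideal of $\mathcal{O}$ corresponding to $\overline{t_i}$, integrality gives $v_{\mathfrak{p}}(a(\theta))\ge v_{\mathfrak{p}}(p)\ge1$, so $a(\theta)\in\mathfrak{p}$; reducing modulo $\mathfrak{p}$ and using that $\overline{t_i}$ is the minimal polynomial over $\F_p$ of the image of $\theta$ forces $\overline{t_i}\mid\overline{a}$ for every $i$, whence $\overline{g}\mid\overline{a}$. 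Writing $\overline{a}=\overline{g}\,\overline{c}$ and using the identity to see that the class of $\beta$ depends only on $\overline{c}\bmod\overline{h}$, the problem becomes: show that if $\gcd(\overline{F},\overline{g},\overline{h})=1$, then integrality of $\gamma=g(\theta)c(\theta)/p$ forces $\overline{c}\equiv0\pmod{\overline{h}}$, that is $\overline{a}=0$.

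This last step is where I expect the real difficulty to lie. Everything up to it is formal manipulation with the single identity $g(\theta)h(\theta)=pF(\theta)$, but pinning down exactly which $\overline{c}$ yield an algebraic integer requires genuine information about the integral closure $\Z_K$ near $p$. Concretely, I would carry out a local computation at each prime $\mathfrak{p}\mid p$, comparing $v_{\mathfrak{p}}(g(\theta))$, $v_{\mathfrak{p}}(c(\theta))$ and $v_{\mathfrak{p}}(p)$; equivalently, I would compute the idealizer $(I:I)$ explicitly and identify $(I:I)/\mathcal{O}$ with data read off from the triple $(\overline{g},\overline{h},\overline{F})$, then show that a nontrivial element of $M$ produces a common factor of $\overline{F}$, $\overline{g}$ and $\overline{h}$. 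It is precisely here that ramification and valuation theory, or equivalently the fact that the radical idealizer strictly enlarges any order that is not $p$-maximal, enters and cannot be avoided; the bookkeeping that converts this local data into the stated $\gcd$ condition is the technical heart of the argument.
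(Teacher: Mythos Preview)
The paper does not give its own proof of this statement: Theorem~\ref{Thm:Dedekind} is stated with the citation \cite{Cohen} and is used as a black box in the proof of Theorem~\ref{Thm:Main}. There is therefore no ``paper's proof'' to compare your proposal against.

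That said, a brief comment on your sketch. Your outline follows the classical line of argument (as found, for instance, in Cohen's treatment): reduce $p\nmid[\Z_K:\Z[\theta]]$ to the statement that no element of $\tfrac{1}{p}\Z[\theta]\setminus\Z[\theta]$ is integral, and then test integrality via the idealizer of the $p$-radical $I=(p,g(\theta))$. Your forward direction ($\gcd\neq 1\Rightarrow M\neq\{0\}$) is essentially complete. For the reverse direction you correctly identify where the work lies but do not carry it out; your appeal to valuations $v_{\mathfrak p}$ at primes $\mathfrak p$ of $\Z_K$ above $p$ is circular as written, since the decomposition of $p\Z_K$ into primes is precisely what Dedekind's criterion is designed to control, and one does not yet know the residue degrees and ramification indices without the result in hand. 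The standard way to finish is to stay inside $\Z[\theta]$: show directly that if $\gcd(\overline F,\overline g,\overline h)=1$ then for every $\overline c\not\equiv 0\pmod{\overline h}$ the element $\gamma=g(\theta)c(\theta)/p$ fails a monic integral equation, by exhibiting a polynomial identity in $\Z[x]/(T)$ that forces a contradiction modulo $p$. Equivalently, one computes $(I:I)/\Z[\theta]$ purely in terms of $\overline g,\overline h,\overline F$ without ever invoking $\Z_K$. Until that computation is supplied, the proposal is a plan rather than a proof.
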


\subsection{The Proof of Theorem \ref{Thm:Main}}
\begin{proof}
Note that when $n=1$, the theorem is trivially true since in that case $T(x)$ is simply a translation of $\Phi_{p^m}(x)$. So, assume that $n\ge 2$.
 We first show that $T(x)$ is irreducible by showing that $T(x)$ is $p$-Eisenstein. Since $\Phi_1(x)=x-1$ and $\Phi_{2^n}(x)=x^{2^{n-1}}+1$, we have from part \eqref{I:0} of Theorem~\ref{Thm:Cyclo} with $q=p$ and $n=1$ that
\begin{equation}\label{Eq:T mod p}
T(x)=\Phi_{p^m}\left(\Phi_{2^n}(x)\right) \equiv \Phi_1(\Phi_{2^n}(x))^{p^{m-1}(p-1)}\equiv x^{2^{n-1}p^{m-1}(p-1)} \pmod{p}.
\end{equation}
It follows that only the leading coefficient of $T(x)$ is not divisible by $p$.  Since the constant term of $T(x)$ is $T(0)=\Phi_{p^m}(\Phi_{2^n}(0))=\Phi_{p^m}(1)=p$, we conclude that $T(x)$ is $p$-Eisenstein.

Next, using Corollary \ref{Cor:Disccomp} and part \eqref{I:3} of Theorem \ref{Thm:Cyclo}, we have that
\begin{align}\label{Eq:Disc(T)}
\begin{split}
\Delta(T)&=\left(\varepsilon p^{p^{m-1}(pm-m-1)}\right)^{2^{n-1}}\left(2^{n-1}\right)^{p^{m-1}(p-1)2^{n-1}}\Phi_{p^m}(1)^{2^{n-1}-1}\\
&=\varepsilon^{2^{n-1}}2^{(n-1)2^{n-1}p^{m-1}(p-1)}p^{m2^{n-1}p^{m-1}(p-1)-1}.\\
\end{split}
\end{align}
Suppose that $T(\theta)=0$, and let $K=\Q(\theta)$.
We now use Theorem \ref{Thm:Dedekind} to show that neither 2 nor $p$ divides $\left[\Z_K:\Z[\theta]\right]$.

We address the prime $p$ first. From \eqref{Eq:T mod p}, we can let $g(x)=x$ and $g(x)h(x)=x^{2^{n-1}p^{m-1}(p-1)}$ in Theorem \ref{Thm:Dedekind}, so that
\[F(x)=\dfrac{x^{2^{n-1}p^{m-1}(p-1)}-T(x)}{p}.\] It follows that $\overline{F}(0)=-1$. Consequently, $\gcd\left(\overline{F},\overline{g}\right)=1$ in $\F_p[x]$, and $\left[\Z_K:\Z[\theta]\right]\not \equiv 0 \pmod{p}$ by Theorem \ref{Thm:Dedekind}.

If $p=2$, there is nothing else to do. So, assume that $p\ne 2$. To address the prime 2, first note that
\begin{equation*}\label{Eq:Mod2}
   T(x)\equiv \Phi_{p^m}\left((x+1)^{2^{n-1}}\right)\equiv \left(\Phi_{p^m}\left(x+1\right)\right)^{2^{n-1}} \pmod{2}.
\end{equation*}
Let $a=\phi\left(p^m\right)$ and $b=\ord_{p^m}(2)$. By part \eqref{I:0} of Theorem \ref{Thm:Cyclo} with $n=p^m$ and $q=2$, we conclude that
\begin{equation}\label{Eq:Phicong}
\Phi_{p^m}\left(x+1\right)\equiv\prod_{i=1}^{a/b}t_i(x)\pmod{2},
\end{equation}
where the $t_i(x)$ are distinct irreducible polynomials each of degree $b$. Hence,
we may take
\begin{equation}\label{Eq:g}
g(x)=\prod_{i=1}^{a/b}t_i(x)\quad \mbox{and} \quad g(x)h(x)=\left(\prod_{i=1}^{a/b}t_i(x)\right)^{2^{n-1}}
\end{equation} in Theorem \ref{Thm:Dedekind}.
Then, from \eqref{Eq:Phicong}, we can write
\begin{equation}\label{Eq:g exp}
g(x)=\Phi_{p^m}\left(x+1\right)+2r(x)=\frac{(x+1)^{p^m}-1}{(x+1)^{p^{m-1}}-1}+2r(x),
\end{equation}
for some polynomial $r(x)\in \Z[x]$. For $n\ge 2$, define
\begin{equation}\label{Eq:Fn}
F_n(x):=\dfrac{g(x)^{2^{n-1}}-\Phi_{p^m}\left(x^{2^{n-1}}+1\right)}{2} \in \Z[x].
\end{equation} By Lemma \ref{Lem:Induction}, we deduce that $g(x)^{2^n}\equiv g\left(x^2\right)^{2^{n-1}} \pmod{4}$ for $n\ge 2$ so that
\[g(x)^{2^n}-\Phi_{p^m}\left(x^{2^n}+1\right)\equiv g\left(x^2\right)^{2^{n-1}}-\Phi_{p^m}\left(\left(x^2\right)^{2^{n-1}}+1\right)  \pmod{4}.\] That is,
\[2F_{n+1}(x)\equiv 2F_n\left(x^2\right) \pmod{4} \quad \mbox{for $n\ge 2$}.\]
Hence, for all $n\ge 2$, it follows that
\[F_{n+1}(x)\equiv F_n\left(x^2\right)\equiv F_n(x)^2 \pmod{2}.\] Consequently, to show that $\gcd\left(\overline{F_n},\overline{g}\right)=1$ for all $n\ge 2$, it is enough to show that $\gcd\left(\overline{F_2},\overline{g}\right)=1$.
Let
\[A(x)=(x+1)^{p^m}-1, \quad B(x)=\left(x^{2}+1\right)^{p^{m-1}}-1,\]
 \[\quad C(x)=\left(x^2+1\right)^{p^m}-1 \quad \mbox{and} \quad D(x)=(x+1)^{p^{m-1}}-1.\] Using \eqref{Eq:g exp} and \eqref{Eq:Fn}, we have that
\begin{align}\label{Eq:F21}
F_2(x)&=\dfrac{\left(\dfrac{A(x)}{D(x)}+2r(x)\right)^{2}-\dfrac{C(x)}{B(x)}}{2} \nonumber\\
&=\dfrac{A(x)^2B(x)-C(x)D(x)^2}{2B(x)D(x)^2}+2\left(\dfrac{r(x)A(x)+r(x)^2D(x)}{D(x)}\right).
\end{align}
Expanding $A(x)^2$, we get
\begin{align*}
  A(x)^2&=(x+1)^{2p^m}-2(x+1)^{p^m}+1\\
  &=\left(x^2+1+2x\right)^{p^m}-2(x+1)^{p^m}+1\\
  &=\left(\left(x^2+1\right)^{p^m}+p^m\left(x^2+1\right)^{p^m-1}(2x)+\cdots +(2x)^{p^m}\right)-2(x+1)^{p^m}+1\\
  &=\left(x^2+1\right)^{p^m}+2p^mx\left(x^2+1\right)^{p^m-1}+4u(x)-2(x+1)^{p^m}+1,
\end{align*}
where
\[4u(x)=\binom{p^m}{2}\left(x^2+1\right)^{p^m-2}(2x)^2+\cdots +\binom{p^m}{p^m-1}\left(x^2+1\right)(2x)^{p^m-1}+(2x)^{p^m}.\]
Similarly,
\[D(x)^2=\left(x^2+1\right)^{p^{m-1}}+2p^{m-1}x\left(x^2+1\right)^{p^{m-1}-1}+4v(x)-2(x+1)^{p^{m-1}}+1,\]
where 
\[4v(x)=\binom{p^{m-1}}{2}\left(x^2+1\right)^{p^{m-1}-2}(2x)^2+\cdots +(2x)^{p^{m-1}}.\]
Substituting the expressions above for $A(x)^2$ and $D(x)^2$ into the numerator of the first term of \eqref{Eq:F21} and cancelling the factor of 2 yields
\begin{equation*}\label{Eq:F22}
  F_2(x)=\dfrac{E(x)}{B(x)D(x)^2}+2\left(\dfrac{u(x)B(x)-v(x)C(x)}{B(x)D(x)^2}+\dfrac{r(x)A(x)+r(x)^2D(x)}{D(x)}\right),
\end{equation*}
where
\begin{multline*}\label{Eq:E}
E(x)=-\left(x^2+1\right)^{p^m}+\left(x^2+1\right)^{p^{m-1}}+p^mx\left(x^2+1\right)^{p^m-1+p^{m-1}}\\
-p^mx\left(x^2+1\right)^{p^m-1}-(x+1)^{p^m}\left(x^2+1\right)^{p^{m-1}}\\
+(x+1)^{p^m}-\left(x^2+1\right)^{p^m-1+p^{m-1}}p^{m-1}x\\
+\left(x^2+1\right)^{p^m}(x+1)^{p^{m-1}}\\
+p^{m-1}x\left(x^2+1\right)^{p^{m-1}-1}-(x+1)^{p^{m-1}}.
\end{multline*}
Then, reduction of $F_2(x)$ modulo 2 produces
\begin{equation}\label{Eq:F22 mod 2}
  \overline{F_2}(x)=\dfrac{\overline{E}(x)}{\left((x+1)^{p^{m-1}}-1\right)^4},
\end{equation}
where
\begin{multline*}\label{Eq:Ebar}
\overline{E}(x)=\left(x+1\right)^{2p^m}+\left(x+1\right)^{2p^{m-1}}+x\left(x+1\right)^{2\left(p^m-1\right)}+(x+1)^{p^m}\left(x+1\right)^{2p^{m-1}}\\
+(x+1)^{p^m}+\left(x+1\right)^{2p^m}(x+1)^{p^{m-1}}
+x\left(x+1\right)^{2\left(p^{m-1}-1\right)}+(x+1)^{p^{m-1}}.
\end{multline*}
  We claim that $\gcd\left(\overline{F_2},\overline{g}\right)=1$. To establish this claim, assume to the contrary that there exists $\alpha$ in some algebraic closure of $\F_2$ such that
  \begin{equation}\label{Eq:Assumption}
  \overline{g}(\alpha)=0=\overline{F_2}(\alpha).
  \end{equation} Then, from \eqref{Eq:Phicong} and \eqref{Eq:g}, we have $\Phi_{p^m}(\alpha+1)=0$ so that $\left(\alpha+1\right)^{p^m}=1$.
    Hence, it follows from \eqref{Eq:F22 mod 2} that
\begin{spreadlines}{.11in}
\begin{align*}
 \overline{F_2}(\alpha)&= \dfrac{\alpha(\alpha+1)^{2\left(p^{m-1}-1\right)}\left(\left((\alpha+1)^{p^{m-1}}\right)^{(p-1)/2}-1\right)^4}{\left((\alpha+1)^{p^{m-1}}-1\right)^4}\\
&=\alpha(\alpha+1)^{2\left(p^{m-1}-1\right)}\left(\prod_{\substack{d\mid \frac{p-1}{2}\\d>1}}\Phi_d\left((\alpha+1)^{p^{m-1}}\right)\right)^4.
\end{align*}
\end{spreadlines}
Thus, since $\overline{F_2}(\alpha)=0$ and both $\overline{g}(0)$ and $\overline{g}(-1)$ are nonzero, we conclude that $\Phi_{p^m}(x+1)$ and $\Phi_{d}\left((x+1)^{p^{m-1}}\right)$ have the root $\alpha$ in common for some divisor $d>1$ of $(p-1)/2$. Repeated application of part \eqref{I:2} of Theorem \ref{Thm:Cyclo} yields
\begin{equation*}
 \Phi_{d}\left((x+1)^{p^{m-1}}\right)=\prod_{j=0}^{m-1}\Phi_{p^jd}(x+1).
\end{equation*}
Therefore, $\Phi_{p^m}(x+1)$ and $\Phi_{p^jd}(x+1)$ have the root $\alpha$ in common for some $j$ with $0\le j\le m-1$. However, recall that $d$ is a divisor of $(p-1)/2$ with $d>1$ and note that $p^jd<p^m$. Hence, $\gcd(d,p)=1$ and $p^m/p^jd$ is not an integer. Thus, from part \eqref{Dresden} of Theorem \ref{Thm:Cyclo}, we have that $R\left(\Phi_{p^jd},\Phi_{p^m}\right)=1$, which contradicts \eqref{Eq:Assumption} by part \eqref{I5:Res=0} of Proposition \ref{Prop:Res}. Therefore, by Theorem \ref{Thm:Dedekind}, we have established that $\left[\Z_K:\Z[\theta]\right]\not \equiv 0 \pmod{2}$, which completes the proof of the theorem.
\end{proof}

\section{Final Remarks}
We certainly do not claim that Theorem \ref{Thm:Main} represents the most general result possible, since computationally there appear to be many other situations where the composition of two cyclotomic polynomials is indeed monogenic. The main difficulty in establishing a result that is more general than Theorem \ref{Thm:Main} arises from the handling of the prime divisors of the resultant appearing in the formula given in Theorem \ref{Thm:Disccomp} for the discriminant of the composition. On the other hand, while many monogenic cyclotomic compositions exist that are not described by Theorem \ref{Thm:Main}, we should point out that slight deviations from the polynomials in Theorem \ref{Thm:Main} yield compositions that are not monogenic. For example, it is not difficult to show that $\Phi_4\left(\Phi_{3^n}(x)\right)$ and $\Phi_3\left(\Phi_{5^n}(x)\right)$ are reducible for all $n\ge 1$, and although $\Phi_2\left(\Phi_{25}(x)\right)$ is irreducible, it is not monogenic since the ring of integers does not possess a power basis. Perhaps these exceptions can be completely determined, but that is a topic for future research.

\end{document}